\documentclass[12pt, twoside, leqno]{article}
\usepackage{amsthm}
\usepackage{amssymb,latexsym}
\usepackage{enumerate}
\usepackage{amsfonts,amsmath,mathrsfs}
\usepackage{topcapt}
\usepackage[cp1250]{inputenc}
\usepackage[T1]{fontenc}
\usepackage{tabularx}
\usepackage{multirow}
\usepackage{epsfig}
\usepackage[figuresright]{rotating}
\usepackage{hyperref}
\hypersetup{hypertexnames=false}
\let\originalforall=\forall
\renewcommand{\forall}{\mathop{\vcenter{\hbox{\Large$\originalforall$}}}}

\let\originalexists=\exists
\renewcommand{\exists}{\mathop{\vcenter{\hbox{\Large$\originalexists$}}}}
\pagestyle{myheadings}

\markboth{Przemysław Ohrysko}{Decomposition of measures}

\newtheorem{thm}{Theorem}[]
\newtheorem{cor}[thm]{Corollary}

\newtheorem{de}[thm]{Definiton}

\frenchspacing

\textwidth=13.5cm \textheight=23cm
\parindent=16pt
\oddsidemargin=-0.5cm \evensidemargin=-0.5cm \topmargin=-0.5cm

\begin{document}

\baselineskip=17pt

\title{An elementary proof of the decomposition of measures on the circle group}

\author{Przemysław Ohrysko\\
Institute of Mathematics\\
Polish Academy of Sciences\\
00-956 Warszawa, Poland\\
E-mail: p.ohrysko@gmail.com}

\date{}

\maketitle

\renewcommand{\thefootnote}{}

\footnote{2010 \emph{Mathematics Subject Classification}: Primary 43A10; Secondary 43A25.}

\footnote{\emph{Key words and phrases}: Natural spectrum, Wiener - Pitt phenomenon, Fourier - Stieltjes coefficients, convolution algebra, spectrum of measure.}

\renewcommand{\thefootnote}{\arabic{footnote}}
\setcounter{footnote}{0}

\begin{abstract}
In this short note we give an elementary proof for the case of the
circle group of the theorem of O. Hatori and E. Sato which states that
every measure on the compact abelian group $G$ can be decomposed
into a sum of two measures with natural spectrum and a discrete
measure.
\end{abstract}
We consider the Banach algebra $M(\mathbb{T})$ of complex Borel
regular measures on the circle group with $M_{d}(\mathbb{T})$ as a
closed subalgebra consisting of all discrete (purely atomic
measures). Let $\mathfrak{M}(M(\mathbb{T}))$ denote the maximal
ideal space of $M(\mathbb{T})$, $\sigma(\mu)$ the spectrum of
$\mu\in M(\mathbb{T})$, $r(\mu)$ its spectral radius and
$\widehat{\mu}$ the Gelfand transform of $\mu$ (for basic facts in commutative harmonic analysis and Banach algebras see for example $\cite{katz}$ and $\cite{z}$) . We will treat the
Fourier - Stieltjes transform of a measure as a restriction of its
Gelfand transform to $\mathbb{Z}$ and so $\widehat{\mu}(n)$ will
stand for the $n$-th Fourier - Stieltjes coefficient
($n\in\mathbb{Z}$). It is an elementary fact that for all
$n\in\mathbb{Z}$ we have $\widehat{\mu}(n)\in\sigma(\mu)$ and
since the spectrum of an element in a unital Banach algebra is
closed we have an inclusion
$\overline{\widehat{\mu}(\mathbb{Z})}\subset\sigma(\mu)$. However, as was
first observed by Wiener and Pitt in $\cite{wp}$ this inclusion may be strict and
this interesting spectral behavior is now known as Wiener - Pitt
phenomenon (for the first precise proof of the existence of the Wiener - Pitt phenomenon see $\cite{schreider}$, for the most general result consult $\cite{wil}$ and $\cite{r}$, an alternative approach is presented in $\cite{graham}$). Therefore, after M. Zafran (see $\cite{Zafran}$), we introduce the following notion.
\begin{de}
We say that a measure $\mu\in M(\mathbb{T})$ has a natural
spectrum, if
\begin{equation*}
\sigma(\mu)=\overline{\widehat{\mu}(\mathbb{Z})}=\overline{\{\widehat{\mu}(n):n\in\mathbb{Z}\}}.
\end{equation*}
The set of all such measures will be denoted by
$\mathcal{N}(\mathbb{T})$.
\end{de}
The structure of $\mathcal{N}(\mathbb{T})$ is very complicated -
in particular, it was shown by M. Zafran for $I$-groups and by
O. Hatori and E. Sato in general case (see $\cite{h}$ and $\cite{hs}$), that this set is not closed under
addition (however, if one of summands has Fourier - Stieltjes coefficients tending to zero then the sum has a natural spectrum - check Theorem 20 in $\cite{ow}$). In order to obtain this result for the compact abelian
groups Hatori and Sato proved the following decomposition theorem.
\begin{thm}[Hatori,Sato]\label{hs}
Let $G$ be a compact abelian group. Then
\begin{equation*}
M(G)=\mathcal{N}(G)+\mathcal{N}(G)+M_{d}(G).
\end{equation*}
\end{thm}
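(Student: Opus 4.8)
Throughout write $\Gamma=\widehat{G}$ for the discrete dual group. The plan is to strip off the discrete part, reduce the remaining problem to producing two summands of natural spectrum modulo absolutely continuous measures, and then obtain those summands from the connected case via the structure theory of compact abelian groups. Two standing facts organize the bookkeeping. First, $L^1(G)\subset\mathcal N(G)$: for $f\in L^1(G)$ one has $\widehat f\in c_0(\Gamma)$, so the Gelfand transform of $f$ vanishes off $\Gamma$ and $\sigma(f)=\widehat f(\Gamma)\cup\{0\}=\overline{\widehat f(\Gamma)}$, which is exactly natural spectrum. Second, if $\nu\in\mathcal N(G)$ and $h\in L^1(G)$ then $\nu+h\in\mathcal N(G)$, by Theorem 20 in \cite{ow} applied to the natural summand $h$ with $\widehat h\in c_0(\Gamma)$. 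Writing $\mu=\mu_c+\mu_d$ (continuous plus discrete) places $\mu_d$ in $M_d(G)$, so it suffices to treat a continuous $\mu_c$; and by the second fact, whenever $\mu_c=\nu_1+\nu_2+g$ with $\nu_i\in\mathcal N(G)$ and $g\in L^1(G)$ we may absorb $g$ into $\nu_1$. Thus the whole theorem reduces to the claim (C): \emph{every continuous $\mu\in M(G)$ can be written $\mu=\nu_1+\nu_2+g$ with $\nu_1,\nu_2\in\mathcal N(G)$ and $g\in L^1(G)$.}

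I would prove (C) first for connected $G$. Connectedness means precisely that $\Gamma$ is torsion-free, and a torsion-free abelian group carries a translation-invariant linear order; let $P$ be its positive cone, so $\Gamma=P\sqcup\{0\}\sqcup(-P)$. For such an order the Helson--Lowdenslager generalization of the F.\ and M.\ Riesz theorem holds: any measure whose Fourier--Stieltjes transform is supported in $P$ is absolutely continuous. This is the single ingredient driving the one-dimensional case ($\mathbb T$, $\Gamma=\mathbb Z$), so the same scheme should apply: split the transform of $\mu$ across $P$ and $-P$ and show that each one-sided piece agrees, off an absolutely continuous measure, with a measure of natural spectrum. The delicate point is that the projection onto $P$ is not bounded on $M(G)$, so the two pieces cannot simply be defined as measures; the correction must be carried out spectrally, using that $P$ is a Riesz set together with boundedness of the projection on $L^2(G)$ and an interpolation/duality estimate controlling the singular part.

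For general $G$ the dual has torsion and admits no order, and this is the real obstacle, already present for a finite group or for $\mathbb T^{n}\times F$ with $F$ finite. I would reduce to the connected case by the structure theorem: every compact abelian group is an inverse limit $G=\varprojlim_\alpha G/H_\alpha$ of Lie groups $G/H_\alpha\cong\mathbb T^{n_\alpha}\times F_\alpha$ with $F_\alpha$ finite. On a single Lie quotient the finite factor is removed by harmonic analysis on $F_\alpha$: since $F_\alpha$ is finite, $M(\mathbb T^{n}\times F_\alpha)\cong\bigoplus_{\chi\in\widehat{F_\alpha}}M(\mathbb T^{n})$ as Banach algebras, and under this identification the Fourier--Stieltjes transform and the spectrum decompose coordinatewise, so a measure is continuous, resp.\ has natural spectrum, iff each of its finitely many components is. Hence (C) on $\mathbb T^{n}\times F_\alpha$ follows from (C) on the connected group $\mathbb T^{n}$ (with a lexicographic order on $\Gamma=\mathbb Z^{n}$), which is the case settled above.

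It remains to pass from the Lie quotients back to $G$, and this limiting step is where I expect the main difficulty to lie. A general continuous $\mu$ is carried by no $G/H_\alpha$; the natural device is to approximate $\mu$ by the conditional expectations $E_\alpha\mu$ obtained by convolving with normalized Haar measure on $H_\alpha$, decompose each $E_\alpha\mu\in M(G/H_\alpha)$ by the previous paragraph, and attempt to take a limit. Since $E_\alpha\mu\to\mu$ only weak$^\ast$ and the decomposition is not norm-bounded uniformly in $\alpha$, producing honest summands for $\mu$ itself requires a compactness or cluster-point argument in $M(G)$ (or on $\mathfrak M(M(G))$) together with stability of natural spectrum under the lift $M(G/H_\alpha)\to M(G)$ and under the passage to the limit. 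Establishing this stability---so that the level-by-level natural-spectrum summands converge to two natural-spectrum summands of $\mu$---is the crux of the argument.
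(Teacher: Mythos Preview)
Your route is entirely different from the paper's, and its central step has a genuine gap. On $\mathbb{T}$ (the only case the paper actually treats) you propose to order $\mathbb{Z}$, split $\widehat\mu$ across $P=\{n>0\}$ and $-P$, and use F.\ and M.\ Riesz to conclude that each one-sided piece is, modulo $L^1$, a measure of natural spectrum. But for a singular continuous $\mu$ there is no measure $\mu_+$ with $\widehat{\mu_+}=\widehat\mu\cdot\mathbf 1_P$: if such $\mu_+,\mu_-$ existed, F.\ and M.\ Riesz would force both into $L^1$, hence $\mu\in L^1$, a contradiction. You recognize that the Riesz projection is unbounded on $M(G)$ and promise a ``spectral correction'' via $L^2$-boundedness and interpolation, but there is no endpoint between $L^2$ and $M$ that controls singular parts, and the Riesz-set property of $P$ only runs one way---it tells you analytic measures are in $L^1$, not how to manufacture analytic pieces of an arbitrary measure. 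The assertion that F.\ and M.\ Riesz is ``the single ingredient driving the one-dimensional case'' is also incorrect: the paper never uses it, and the whole inverse-limit apparatus you build on top of the connected case inherits this gap.

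The paper's mechanism is the opposite of making the summands spectrally thin: it makes their Fourier ranges so thick that they \emph{must} equal the spectrum. With the orthogonal idempotents $\theta_0=\tfrac12(\delta_0+\delta_\pi)$, $\theta_1=\tfrac12(\delta_0-\delta_\pi)$ one writes $\mu=\mu\ast\theta_0+\mu\ast\theta_1=\mu_0+\mu_1$. Choosing $\rho=\tfrac12(\delta_\alpha+\delta_\beta)$ with $\alpha,\beta,\pi$ rationally independent, Kronecker's theorem gives $\overline{\widehat\rho(2\mathbb{Z}+1)}=\overline{\mathbb D}$. Set $\nu_0=\mu_0+r(\mu_0)\,\rho\ast\theta_1$: since $\theta_0\ast\theta_1=0$, the two pieces have disjoint Fourier supports, so $\overline{\widehat{\nu_0}(\mathbb{Z})}=r(\mu_0)\,\overline{\mathbb D}$; and every $\varphi\in\mathfrak M(M(\mathbb T))$ annihilates one of $\theta_0,\theta_1$, forcing $|\varphi(\nu_0)|\le r(\mu_0)$. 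Hence $\sigma(\nu_0)=r(\mu_0)\,\overline{\mathbb D}=\overline{\widehat{\nu_0}(\mathbb{Z})}$, and symmetrically for $\nu_1$. The discrete remainder $\nu_2=-r(\mu_0)\rho\ast\theta_1-r(\mu_1)\rho\ast\theta_0$ is exactly the $M_d$ term in the statement; no stripping of the discrete part, no $L^1$ absorption, and no quotient/limit argument is used.
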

The proof of this theorem in full generality relies on heavy
machinery of commutative harmonic analysis. However, as it will be
presented soon, the idea is in fact quite simple and this paper is
devoted to exhibit the core of the proof to wider audience.
Unfortunately, the group $G$ itself may be too convoluted and then
such an elementary arguments are not available. Hence we will
restrict our considerations to $G=\mathbb{T}$.
\\
The main tool for us will be the classical approximation theorem
of Kronecker (for a proof consult $\cite{katz}$).
\begin{thm}
Let $\alpha,\beta\in (0,2\pi)$ be real numbers and assume that
$\alpha,\beta,\pi$ are linearly independent over the field of
rational numbers. Then, for every real numbers $x,y$ and any
$\varepsilon>0$ there exists an integer $n$ such that
\begin{equation*}
|e^{in\alpha}-e^{ix}|<\varepsilon\text{ and
}|e^{in\beta}-e^{iy}|<\varepsilon.
\end{equation*}
\end{thm}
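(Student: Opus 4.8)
The plan is to reduce the statement to the assertion that the subgroup
\[
\Gamma=\{(n\alpha\bmod 2\pi,\ n\beta\bmod 2\pi):n\in\mathbb{Z}\}
\]
is dense in the torus $\mathbb{T}^{2}:=(\mathbb{R}/2\pi\mathbb{Z})^{2}$. Granting this, the conclusion is routine: given $x,y$ and $\varepsilon>0$, uniform continuity of $t\mapsto e^{it}$ supplies a $\delta>0$ such that a displacement of less than $\delta$ in $\mathbb{R}/2\pi\mathbb{Z}$ moves $e^{it}$ by less than $\varepsilon$; density of $\Gamma$ then yields an integer $n$ with $n\alpha$ within $\delta$ of $x$ and $n\beta$ within $\delta$ of $y$ modulo $2\pi$, which gives the two desired inequalities simultaneously.

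To establish density, set $H=\overline{\Gamma}\subseteq\mathbb{T}^{2}$. As $\Gamma$ is a subgroup and multiplication and inversion on the compact group $\mathbb{T}^{2}$ are continuous, $H$ is a closed subgroup. Assume toward a contradiction that $H\neq\mathbb{T}^{2}$. Then $H$ is contained in the kernel of some non-trivial continuous character of $\mathbb{T}^{2}$; this is the one point that is not entirely elementary (it is Pontryagin duality for the compact abelian group $\mathbb{T}^{2}/H$), but for $\mathbb{T}^{2}$ it can be obtained by hand --- for instance by comparing the Fourier coefficients of the Haar measure of $H$ with those of Lebesgue measure on $\mathbb{T}^{2}$, or by invoking the classification of closed subgroups of $\mathbb{T}^{2}$. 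Since every continuous character of $\mathbb{T}^{2}$ has the form $(s,t)\mapsto e^{i(ps+qt)}$ with $(p,q)\in\mathbb{Z}^{2}$, and non-triviality means $(p,q)\neq(0,0)$, evaluating it at the generator $(\alpha,\beta)\in\Gamma\subseteq H$ gives $e^{i(p\alpha+q\beta)}=1$, i.e. $p\alpha+q\beta=2\pi m$ for some $m\in\mathbb{Z}$.

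This last identity rearranges to $p\alpha+q\beta-2m\pi=0$ with $(p,q,2m)\neq(0,0,0)$, a non-trivial linear relation over $\mathbb{Q}$ among $\alpha,\beta,\pi$, contradicting the hypothesis; hence $H=\mathbb{T}^{2}$, $\Gamma$ is dense, and the theorem follows. The main obstacle is precisely the step flagged above --- passing from ``$H$ is a proper closed subgroup'' to ``$H$ lies in the kernel of a non-trivial character''. If one prefers to bypass any duality theory, the cleanest substitute is Weyl's equidistribution theorem: the hypothesis is exactly the statement that $p\alpha+q\beta\notin 2\pi\mathbb{Z}$ for every $(p,q)\in\mathbb{Z}^{2}\setminus\{(0,0)\}$, so each Weyl sum $\frac1N\sum_{n=1}^{N}e^{in(p\alpha+q\beta)}$ is a geometric series tending to $0$; by Weyl's criterion the sequence $\bigl(\tfrac{n\alpha}{2\pi},\tfrac{n\beta}{2\pi}\bigr)_{n\ge 1}$ is equidistributed modulo $1$, hence dense, which again yields the claim.
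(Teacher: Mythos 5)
The paper does not actually prove this theorem: it is quoted as the classical Kronecker approximation theorem, with the reader referred to Katznelson for a proof. So there is no in-paper argument to compare yours against; what matters is whether your proof stands on its own, and it does. The reduction to density of $\{(n\alpha,n\beta)\bmod 2\pi\}$ in $(\mathbb{R}/2\pi\mathbb{Z})^2$ is correct (indeed $|e^{is}-e^{it}|\le|s-t|$, so one can even take $\delta=\varepsilon$), and both routes you offer for the density are sound. In the duality route, the one non-elementary step is exactly the one you flag --- that a proper closed subgroup of $\mathbb{T}^2$ lies in the kernel of a non-trivial character --- and the contradiction you extract, $p\alpha+q\beta-2m\pi=0$ with $(p,q)\neq(0,0)$, is a genuine non-trivial rational relation, as required. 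The Weyl route closes that gap cleanly: your claimed equivalence between the independence hypothesis and ``$p\alpha+q\beta\notin 2\pi\mathbb{Z}$ for all $(p,q)\neq(0,0)$'' does hold (any integer relation $a\alpha+b\beta+c\pi=0$ doubles to $2a\alpha+2b\beta\in 2\pi\mathbb{Z}$ with $(2a,2b)\neq(0,0)$), the geometric-series bound $\bigl|\frac1N\sum_{n=1}^N e^{in\theta}\bigr|\le \frac{2}{N|e^{i\theta}-1|}$ is correct, and equidistribution implies density. The only residual dependence is on Weyl's criterion itself (i.e.\ on trigonometric approximation in $C(\mathbb{T}^2)$), which is standard; given that the paper simply cites the result, your write-up is, if anything, more self-contained than the source.
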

Now, we use the Kronecker theorem to analyze the set of values of
the Fourier - Stieltjes transforms of some discrete measures.
\begin{cor}\label{wni}
Let $\alpha,\beta$ be real numbers such that $\alpha,\beta,\pi$
are linearly independent over $\mathbb{Q}$. Then for
$\rho:=\frac{\delta_{\alpha}+\delta_{\beta}}{2}$ the following
holds
\begin{equation*}
\overline{\widehat{\rho}(\mathbb{Z})}=\overline{\mathbb{D}}=\{z\in\mathbb{C}:|z|\leq
1\}.
\end{equation*}
Moreover,
\begin{equation*}
\overline{\widehat{\rho}(2\mathbb{Z})}=\overline{\widehat{\rho}(2\mathbb{Z}+1)}=\overline{\mathbb{D}}.
\end{equation*}
\end{cor}
\begin{proof}
From the Kronecker theorem we have
\begin{equation*}
\overline{\{(e^{in\alpha},e^{in\beta}):n\in\mathbb{Z}\}}=\{z\in\mathbb{C}:|z|=1\}\times\{u\in\mathbb{C}:|u|=1\}.
\end{equation*}
Now it is a straightforward calculation that the mapping
\begin{equation*}
f:\{z\in\mathbb{C}:|z|=1\}\times\{u\in\mathbb{C}:|u|=1\}\mapsto\overline{\mathbb{D}}
\end{equation*}
defined by the formula
\begin{equation*}
f(z,u)=\frac{z+u}{2}\text{  is onto},
\end{equation*}
which gives the first assertion. For the second fact we observe
that
\begin{equation*}
\frac{(\widehat{\delta_{\alpha}}+\widehat{\delta_{\beta}})(2n)}{2}=\frac{e^{-2in\alpha}+e^{-2in\beta}}{2}=\frac{(\widehat{\delta_{2\alpha}}+\widehat{\delta_{2\beta}})(n)}{2}
\end{equation*}
and since numbers $2\alpha,2\beta,\pi$ are also independent over
$\mathbb{Q}$ the result follows once more from the Kronecker
theorem. For the last part we easily check that the function
$g(z,u)=\frac{ze^{-i\alpha}+ue^{-i\beta}}{2}$ defined on the same
domain as $f$ is onto $\overline{\mathbb{D}}$.
\end{proof}
We are ready now to give an elementary proof of Theorem $\ref{hs}$
for the case $G=\mathbb{T}$.
\\
Let $\alpha,\beta\in (0,2\pi)$ be such that the numbers
$\alpha,\beta,\pi$ are linearly independent over $\mathbb{Q}$ and
put $\rho=\frac{\delta_{\alpha}+\delta_{\beta}}{2}$,
$\theta_{0}=\frac{\delta_{0}+\delta_{\pi}}{2}$,
$\theta_{1}=\frac{\delta_{0}-\delta_{\pi}}{2}$. For any $\mu\in
M(\mathbb{T})$ we define $\mu_{0}=\mu\ast\theta_{0}$,
$\mu_{1}=\mu-\mu_{0}=\mu\ast\theta_{1}$ and $r_{0},r_{1}$ equal to
$r(\mu_{0}),r(\mu_{1})$ (respectively). Moreover, we put
\begin{gather*}
\nu_{0}=\mu_{0}+r_{0}\rho\ast\theta_{1},\\
\nu_{1}=\mu_{1}+r_{1}\rho\ast\theta_{0},\\
\nu_{2}=-r_{0}\rho\ast\theta_{1}-r_{1}\rho\ast\theta_{0}.
\end{gather*}
Then $\mu=\nu_{0}+\nu_{1}+\nu_{2}$ and $\nu_{2}\in
M_{d}(\mathbb{T})$. It is enough to show that
$\nu_{0}\in\mathcal{N}(\mathbb{T})$ (second part is obtained
analogously). Since
\begin{equation*}
\widehat{\rho\ast\theta_{1}}(\mathbb{Z})=\widehat{\rho}(2\mathbb{Z}+1)\cup\{0\}
\end{equation*}
we have from Corollary $\ref{wni}$
\begin{equation}\label{cal}
r_{0}\overline{\mathbb{D}}=r_{0}\overline{\widehat{(\rho\ast\theta_{1})}(\mathbb{Z})}.
\end{equation}
Remembering that $(\delta_{0}-\delta_{\pi})\ast
(\delta_{0}+\delta_{\pi})=0$ we get $\theta_{1}\ast\theta_{0}=0$
which leads to
\begin{equation*}
\mu_{0}\ast\rho\ast\theta_{1}=0
\end{equation*}
and so
\begin{equation*}
\widehat{\nu_{0}}(\mathbb{Z})\subset
\widehat{\mu_{0}}(\mathbb{Z})\cup
r_{0}\widehat{(\rho\ast\theta_{1})}(\mathbb{Z}).
\end{equation*}
By the definition of the spectral radius and $(\ref{cal})$ we have
\begin{equation*}
\overline{\widehat{\nu_{0}}(\mathbb{Z})}\subset
r_{0}\overline{\widehat{(\rho\ast\theta_{1})}(\mathbb{Z})}=r_{0}\overline{\mathbb{D}}.
\end{equation*}
On the other hand, examining the formula for $\nu_{0}$ we get
\begin{equation}\label{zaw}
r_{0}\widehat{\rho}(2\mathbb{Z}+1)=\widehat{\nu_{0}}(2\mathbb{Z}+1)\subset\widehat{\nu_{0}}(\mathbb{Z}).
\end{equation}
Combining $(\ref{cal})$ and $(\ref{zaw})$ we obtain
\begin{equation}\label{row}
\overline{\widehat{\nu_{0}}(\mathbb{Z})}=r_{0}\overline{\widehat{(\rho\ast\theta_{1})}(\mathbb{Z})}=r_{0}\overline{\mathbb{D}}.
\end{equation}
Recalling once again that $\theta_{0}\ast\theta_{1}=0$ we have for
every $\varphi\in\mathfrak{M}(M(\mathbb{T}))$ either
$\varphi(\theta_{0})=0$ or $\varphi(\theta_{1})=1$. If
$\varphi(\theta_{0})=0$ then
$\varphi(\nu_{0})=r_{0}\varphi(\rho)\varphi(\theta_{0})$ and
\begin{equation*}
|\varphi(\nu_{0})|\leq r_{0} r(\rho)r(\theta_{0})\leq r_{0}
||\rho||\cdot ||\theta_{0}||=r_{0}.
\end{equation*}
In the second case, $\varphi(\theta_{1})=0$ which provides
$|\varphi(\nu_{0})|\leq r_{0}$. Since the spectrum of a measure is
an image of its Gelfand transform $\sigma(\nu_{0})\subset
r_{0}\overline{\mathbb{D}}$. But from $(\ref{row})$
\begin{equation*}
r_{0}\overline{\mathbb{D}}=\overline{\widehat{\nu_{0}}(\mathbb{Z})}\subset
\sigma(\nu_{0})
\end{equation*}
which finally gives
$\sigma(\nu_{0})=\overline{\widehat{\nu_{0}}(\mathbb{Z})}$ and
finishes the proof.
\\
I would like to express my warm gratitude to the professor Michał Wojciechowski for invaluable suggestions and ideas.

\end{document}